\numberwithin{equation}{section}
\def\pmod #1{\ ({\rm{mod}}\ #1)}
\theoremstyle{plain}
\newtheorem{theorem}{Theorem}
\newtheorem*{theorem0}{Nathanson's theorem}
\newtheorem*{theoremA}{Theorem A}
\newtheorem*{theoremB}{Theorem B}
\newtheorem*{theoremD}{Theorem D}
\newtheorem*{theoremE}{Theorem E}
\newtheorem{problem}{Problem}
\newtheorem{proposition}{Proposition}
\theoremstyle{definition}
\newtheorem{remark}{Remark}
\newtheorem{example}{Example}
\patchcmd{\@settitle}{\uppercasenonmath\@title}{}{}{}
\patchcmd{\@setauthors}{\MakeUppercase}{}{}{}
\patchcmd{\section}{\scshape}{}{}{}
\begin{document}

\title
[{On a problem of Nathanson on non-minimal additive complements}]
{On a problem of Nathanson on non-minimal additive complements}

\author
[S.--Q. Chen \quad $\&$ \quad Y. Ding]
{Shi--Qiang Chen$^{\dagger}$ \quad {\it and} \quad Yuchen Ding$^{\dagger\dagger}$}

\address{(Shi-Qiang Chen) School of Mathematics and Statistics, Anhui Normal University,  Wuhu  241002,  People's Republic of China}
\email{csq20180327@163.com}
\address{(Yuchen Ding) School of Mathematical Sciences,  Yangzhou University, Yangzhou 225002, People's Republic of China}
\email{ycding@yzu.edu.cn}
\thanks{$^{\dagger}$ ORCID:  0000-0002-0439-6079}
\thanks{$^{\dagger\dagger}$ ORCID:  0000-0001-7016-309X  \quad $\&$ \quad Corresponding author}

\keywords{ integer sets; minimal additive complement}
\subjclass[2010]{11B13}

\begin{abstract}
Let $C$ and $W$ be two sets of integers. If $C+W=\mathbb{Z}$, then $C$ is called an additive complement to $W$. We further call $C$ a minimal additive complement to $W$ if no proper subset of $C$ is an additive complement to $W$.
Answering a problem of
Nathanson in part, we give sufficient conditions of $W$ which has no minimal additive complements. Our result also extends a prior result of Chen and Yang.
\end{abstract}
\maketitle

\section{Introduction}
Let $\mathbb{Z}$ be the set of all integers. For $C,W\subseteq \mathbb{Z}$ and $c\in \mathbb{Z}$, define
$$
C+W:= \big\{c+w: c\in C,~w\in W\big\} \quad \text{and} \quad cW:=\big\{cw: c\in C\big\}.
$$
For the set $W=\{w\}$ consisted of a single element, we just write $C+\{w\}$ as  $C+w$ for simplicity.
If $C+W=\mathbb{Z}$, then $C$ is called an additive complement to $W$.
Furthermore, if any proper subset of $C$ is not an additive complement to $W$, then $C$ is called a minimal additive complement to $W$. Sometimes, for simplicity, a minimal additive complement is just called {\it a minimal complement} throughout this article.

In 2011, Nathanson \cite[Theorem 8]{N2011} proved the following interesting theorem.
\begin{theorem0}\label{th0} Let $W$ be a nonempty, finite set of integers. In $\mathbb{Z}$, every complement to $W$ contains a minimal complement to $W$.
\end{theorem0}

Nathanson \cite[Problem 11]{N2011} also posed the following open problem.
\begin{problem}\label{pro1} Let $W$ be an infinite set of integers. Does there exist a minimal complement to $W$? Does there exist a complement to $W$ that does not contain a minimal complement?
\end{problem}
In 2012, Chen and Yang \cite[Theorem 1]{CY2012} considered Problem \ref{pro1}. For the situation $\inf W=-\infty$ and $\sup W=+\infty$, they made the following progress.
\begin{theoremA}[Chen--Yang] Let $W$ be a set of integers with $\inf W=-\infty$ and $\sup W=+\infty$. Then there exists a minimal complement to $W$.
\end{theoremA}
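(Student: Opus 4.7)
My plan is to construct a minimal complement $C$ to $W$ by a greedy inductive procedure. The idea is to enumerate $\Z$ and, at each stage, cover the next uncovered integer by adjoining a carefully chosen element $c$ to $C$ together with a designated ``private witness'' $y\in c+W$ that guarantees $c$ is irredundant in $C$. The two-sided unboundedness of $W$ is what ensures such witnesses can always be placed.

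Concretely, I would fix an enumeration $\Z=\{z_{1},z_{2},\ldots\}$ and build $C=\{c_{1},c_{2},\ldots\}$ together with a corresponding sequence of witnesses $\{y_{1},y_{2},\ldots\}$ satisfying (i) $\{z_{1},\ldots,z_{n}\}\subseteq C_{n}+W$ at every stage, so $C$ is a complement in the limit, and (ii) $y_{n}\in c_{n}+W$ while $y_{n}\notin c_{m}+W$ for every $m\neq n$. Property (ii) forces minimality: removing $c_{n}$ would leave $y_{n}$ uncovered. At stage $n$, if $z_{n}$ is already covered by $C_{n-1}+W$ I do nothing; otherwise I choose a new pair $(c,y)$ with $z_{n}\in c+W$ and $y\in c+W$ subject to the two constraints (a) $y\notin c_{j}+W$ for every earlier $c_{j}$, so that $y$ is a private witness at the moment of its selection, and (b) $y_{j}\notin c+W$ for every earlier witness $y_{j}$, so that adjoining $c$ does not retroactively spoil any past witness. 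Condition (b), re-imposed at every later stage, propagates forward and keeps each chosen witness uncovered for the rest of the construction, giving (ii) in the limit.

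The main obstacle is showing that a valid pair $(c,y)$ can always be produced. Both (a) and (b) reduce to a claim of the form: given a finite list of integer shifts $\{d_{1},\ldots,d_{\ell}\}$ coming from the history, find $w\in W$ with $w+d_{i}\notin W$ for every $i$. This kind of claim can fail for structured sets such as $W=2\Z$ with $d_{1}=2$, so I plan to split off the case in which $W$ admits a finite complement $C_{0}$: then the subsets of $C_{0}$ which are themselves complements form a nonempty finite family, and any one of smallest cardinality is automatically a minimal complement. In the remaining case, $W$ admits no finite complement, so $\Z\setminus(F+W)\neq\emptyset$ for every finite $F\subseteq\Z$; I plan to exploit this together with $\inf W=-\infty$ and $\sup W=+\infty$ to produce, at each inductive step, elements $w\in W$ of sufficiently extreme magnitude for which all of $w+d_{1},\ldots,w+d_{\ell}$ fall outside $W$. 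Formalising this auxiliary existence lemma cleanly, and dovetailing it with the bookkeeping of past witnesses and future obligations so that the induction never runs out of moves, is the crux of the proof.
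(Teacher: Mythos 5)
This statement is \textbf{Theorem A}, which the paper quotes from Chen and Yang \cite{CY2012} without reproducing a proof, so there is no in-paper argument to compare against; I can only assess your proposal on its own terms. Your skeleton --- a greedy enumeration of $\Z$ in which each new element $c$ of $C$ is installed together with a private witness $y\in c+W$ that no other element of $C$ ever covers --- is the standard and correct framework for this theorem, and your Case 1 (a finite complement contains a sub-complement of minimum cardinality, which is automatically minimal) is fine.

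The gap is in Case 2. You reduce the inductive step to the claim: if $W$ has no finite complement, then for any finite set of nonzero shifts $d_1,\dots,d_\ell$ there is $w\in W$ with $w+d_i\notin W$ for all $i$. This claim is false. Take
$$
W=\bigcup_{k\in\Z\setminus\{0\}}\{a_k,\,a_k+1\},\qquad a_k=\mathrm{sign}(k)\cdot 4^{|k|}.
$$
Then $\inf W=-\infty$, $\sup W=+\infty$, and $W$ has no finite complement (for a finite $C\subseteq[-M,M]$ every element of $C+W$ lies within $M+1$ of some $\pm 4^{j}$, so e.g.\ $2\cdot 4^{k}$ is uncovered for large $k$). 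Yet for the shift set $\{1,-1\}$ every $w\in W$ satisfies $w+1\in W$ or $w-1\in W$, so no admissible $w$ exists. Consequently your dichotomy does not rescue the construction: as soon as the history forces the relevant shift set to contain both $+1$ and $-1$ (for instance, two earlier witnesses $y_{j_1},y_{j_2}$ with $y_{j_1}-z_n=1$ and $y_{j_2}-z_n=-1$ when you try to choose $c$, or earlier elements sitting at both $c-1$ and $c+1$ when you try to choose the witness for $c$), the step as described cannot be completed. The theorem is still true for such $W$, so the fix must come from exploiting the remaining freedom --- $c=z_n-w_0$ can be chosen from infinitely many $w_0\in W$, and the elements and witnesses can be steered away from blocking configurations --- but organizing that is precisely the substance of the proof, and it is absent. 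Identifying the crux as ``formalising the auxiliary existence lemma'' understates the difficulty: the lemma needs to be replaced, not formalised.
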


It therefore remains to deal with the case $\inf W>-\infty$ or $\sup W<+\infty$, thanks to  {\bf Theorem A}. Without loss of generality, we may assume that $\inf W>-\infty$ since otherwise we could consider $-W$ instead of $W$. Suppose now that $\inf W=w_{0}>-\infty$, then we can consider the set
$$
W-(w_{0}-1):=\{w-(w_{0}-1):w\in W\}
$$
instead of $W$. Thus, without loss of generality, we may further assume that $\inf W=1$.
Based on these observations, Chen and Yang \cite[Theorem 2]{CY2012} proved the following theorem.

\begin{theoremB}[Chen--Yang]\label{THMB} Let $W=\{1=w_1<w_2<\cdots\}$ be a set of integers and
$$\overline{W}=\mathbb{Z}^{+}\setminus W=\{\overline{w_1}<\overline{w_2}<\cdots\},$$
where $\mathbb{Z}^{+}$ is the set of all positive integers as usual.\\
\rm{($a$)} Suppose that
$$
\limsup_{i\rightarrow+\infty}(w_{i+1}-w_i)=+\infty,
$$
then there exists a minimal complement to $W$.\\
\rm{($b$)} Suppose that
$$
\lim_{i\rightarrow+\infty}(\overline{w_{i+1}}-\overline{w_i})=+\infty,
$$
then there does not exist a minimal complement to $W$.
\end{theoremB}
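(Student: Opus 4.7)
I would prove part (b) by a short witness-cardinality contradiction and part (a) by an inductive construction that repeatedly exploits large gaps in $W$ to pin down witnesses.

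For part (b), assume $C$ is a minimal complement; minimality gives, for each $c\in C$, a \emph{witness} $n_c$ with $n_c-c\in W$ and $n_c-c'\notin W$ for every $c'\in C\setminus\{c\}$. Since $w_1=1$ forces $n_c>c$, and $n_c-c'$ is a positive integer whenever $c'<n_c$, the witness condition rewrites as
\[
C\cap(-\infty,n_c)\setminus\{c\}\ \subseteq\ n_c-\overline{W}.
\]
Pick two distinct elements $c_2,c_3\in C$. The equality $n_{c_2}=n_{c_3}$ would give $n_{c_2}-c_3=w_{c_3}\in W$, violating the witness property at $c_2$; hence $\Delta:=|n_{c_3}-n_{c_2}|>0$. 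Every $c_1\in C\setminus\{c_2,c_3\}$ lying below $\min(n_{c_2},n_{c_3})$ then yields $u:=n_{c_2}-c_1\in\overline{W}$ with also $u+\Delta=n_{c_3}-c_1\in\overline{W}$. But $\overline{w_{i+1}}-\overline{w_i}\to+\infty$ allows only finitely many such $u$, whereas $C$ is necessarily unbounded below (to represent arbitrarily negative integers using $w\geq 1$), so infinitely many candidate $c_1$ exist---contradiction.

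For part (a), I would extract indices $i_1<i_2<\cdots$ with $g_k:=w_{i_k+1}-w_{i_k}\to+\infty$ as fast as desired and build $C$ inductively along an enumeration $\mathbb{Z}=\{z_1,z_2,\ldots\}$. At step $k$, if $z_k$ is not yet covered by the previously chosen elements, introduce $c_k\in z_k-W$ together with a tentative witness $m_k=c_k+w_{i_{\nu(k)}}$ for a suitable index $\nu(k)$. The decisive point is that whenever $0<d<g_{\nu(k)}$, the integer $w_{i_{\nu(k)}}+d$ lies strictly between two consecutive elements of $W$ and so belongs to $\overline{W}$; consequently, every element within distance $g_{\nu(k)}-1$ below $c_k$ is automatically harmless to $m_k$. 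Choosing $\nu(k)$ so large that $g_{\nu(k)}$ dominates all previously recorded distances, and choosing $c_k$ inside $(z_k-W)\setminus\bigcup_{j<k}(m_j-W)$, maintains two running invariants: (i) no newly added element spoils an earlier witness; (ii) $m_k$ remains valid against every later addition. In the limit $C+W=\mathbb{Z}$ and each $c_k$ carries its own witness, so $C$ is minimal.

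The hard part will be the feasibility check inside the construction of part (a): at every step $(z_k-W)\setminus\bigcup_{j<k}(m_j-W)$ must be nonempty, and the chosen $c_k$ must be compatible with the future distance constraints on $\nu(k')$ for $k'>k$. This is delicate because covering very negative $z_k$ forces $c_k$ far below the earlier $c_j$'s, so $g_{\nu(k)}$ must be chosen large enough to shelter the witness against all subsequent placements. Part (b), by contrast, is immediate from the two-witness argument and needs no such bookkeeping.
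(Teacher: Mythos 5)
Your proof of part (b) is correct and complete. The witness formalism is set up properly (minimality gives, for each $c\in C$, an integer $n_c$ with $n_c-c\in W$ and $n_c-c'\notin W$ for all $c'\in C\setminus\{c\}$), the observation that $n_{c_2}\neq n_{c_3}$ for distinct $c_2,c_3$ is right, and since $\inf C=-\infty$ you really do get infinitely many $u=n_{c_2}-c_1\in\overline{W}$ with $u+\Delta\in\overline{W}$ for the fixed $\Delta=|n_{c_3}-n_{c_2}|\ge 1$, which contradicts $\overline{w_{i+1}}-\overline{w_i}\to+\infty$. Note that the paper does not reprove Theorem B (it is quoted from Chen--Yang \cite{CY2012}); the closest thing in the paper is the proof of Theorem \ref{thm1}, which generalizes B($b$). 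Your argument is the degenerate case of that proof: because only a subsequence of gaps diverges there, the paper must take $K+2$ witnesses and pigeonhole them into the at most $K+1$ elements of $\overline{W}$ trapped in one large gap, whereas your full-limit hypothesis lets two witnesses suffice. So (b) is a legitimately shorter, self-contained proof.

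Part (a) has a genuine gap, and it sits exactly where you flag it: the nonemptiness of $(z_k-W)\setminus\bigcup_{j<k}(m_j-W)$ at every step \emph{is} the theorem, and the mechanism you describe does not deliver it. Unwinding the condition, you must find $w\in W$ (so that $c_k=z_k-w$) such that for every earlier live witness $m_j$ with $m_j\ge c_k+1$ one has $m_j-c_k=w+e_j\in\overline{W}$, where $e_j=m_j-z_k$. These shifts are forced by the earlier history and the current target and can take both signs. Your sheltering device --- place $w$ at the left endpoint $w_{i_{\nu}}$ of a huge gap so that $w+e\in(w_{i_\nu},w_{i_\nu+1})\subseteq\overline{W}$ --- works only for $e>0$; for $e<0$ you would need $w$ at the \emph{right} endpoint of a huge gap, and a single $w$ cannot do both unless $W$ has an isolated point flanked by two large gaps, which $\limsup_i(w_{i+1}-w_i)=+\infty$ does not supply. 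Concretely, for a block-structured $W=\{1\}\cup\bigcup_k[a_k,b_k]$ with $b_k-a_k\to\infty$ and $a_{k+1}-b_k\to\infty$, no $w\ge 2$ in $W$ has both $w-1\in\overline{W}$ and $w+1\in\overline{W}$, so if the bookkeeping ever leaves two live witnesses at $z_k-1$ and $z_k+1$ the prescribed choice set is essentially empty. This does not make the theorem false --- the collision can be engineered away by controlling where the witnesses are placed relative to all targets that will still require new elements, or by disposing of the positively-indexed targets before the induction starts --- but that design work is precisely the proof of (a), and it is absent. As written, (a) is a plan with an acknowledged unverified feasibility claim at its core, not a proof.
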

In 2019, Kiss, S\'{a}ndor and Yang \cite{KSY2019} pointed that if
$$
W=\bigcup\limits_{k=0}^{+\infty} [10^k,2\times 10^k],
$$
then it clear that
$$
\limsup_{i\rightarrow+\infty}(w_{i+1}-w_i)=+\infty \quad \text{and} \quad \limsup_{i\rightarrow+\infty}(\overline{w_{i+1}}-\overline{w_i})=+\infty.
$$
Hence the criterion $\lim_{i\rightarrow+\infty}(\overline{w_{i+1}}-\overline{w_i})=+\infty$ in {\bf Theorem B}($b$) cannot be easily improved to $\limsup_{i\rightarrow+\infty}(\overline{w_{i+1}}-\overline{w_i})=+\infty$ since otherwise there will be a contradiction between {\bf Theorem B}($a$) and {\bf Theorem B}($b$) for this $W$. Therefore, it would be meaningful to think of the following question asked by Yang (private communications).
\begin{problem}\label{pro2}Let $W=\{1=w_1<w_2<\cdots\}$ be a set of integers and
$$
\overline{W}=\mathbb{Z}^+\setminus W=\{\overline{w_1}<\overline{w_2}<\cdots\}.
$$
Determine the structures of $W$ with $\limsup_{i\rightarrow+\infty}(\overline{w_{i+1}}-\overline{w_i})=+\infty$ which does not have a minimal complement.
\end{problem}

Extending {\bf Theorem B}($b$), we answer Problem \ref{pro2} by the following theorem which can be viewed as a partial solution to Problem \ref{pro1}.

\begin{theorem}\label{thm1} Let $W=\{1=w_1<w_2<\cdots\}$ be a set of integers and
$$
\overline{W}=\mathbb{Z}^+\setminus W=\{\overline{w_1}<\overline{w_2}<\cdots\}.
$$
If there exists a sequences $i_1<i_2<\cdots$ such that
$$
\lim_{t\rightarrow+\infty}\left(\overline{w_{i_t+1}}-\overline{w_{i_t}}\right)=+\infty \quad
\text{and} \quad
\limsup_{t\rightarrow+\infty}\big|\overline{W}\cap(\overline{w_{i_{t}}},\overline{w_{i_{t+1}}})\big|<+\infty,
$$
then there does not exist a minimal complement to $W$.
\end{theorem}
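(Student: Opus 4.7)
My plan is to argue by contradiction. Assume that $C$ is a minimal additive complement to $W$. Then for each $c \in C$ there exists a witness $n_c = c + w_c$: an integer with unique decomposition in $C + W$, so that $n_c - c = w_c \in W$ while $n_c - c' \notin W$ for every $c' \in C \setminus \{c\}$. Since $W \subseteq \mathbb{Z}^+$, this forces $n_c - c' \in \overline{W}$ for every $c' \in C \setminus \{c\}$ with $c' < n_c$; equivalently, $C \cap (-\infty, n_c - 1] \setminus \{c\} \subseteq n_c - \overline{W}$. This is the structural constraint coupling $C$ to $\overline{W}$.

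Next I would bring in the hypothesis. Write $\alpha_t := \overline{w_{i_t}}$ and $\beta_t := \overline{w_{i_{t+1}}}$, and fix a constant $H$ with $|\overline{W} \cap (\alpha_t, \beta_t)| \leq H$ for all sufficiently large $t$; note $\beta_t - \alpha_t \geq \overline{w_{i_t+1}} - \overline{w_{i_t}} \to +\infty$. For any witness $n_c \geq \beta_t$, the map $c' \mapsto n_c - c'$ sends $C \cap (n_c - \beta_t, n_c - \alpha_t)$ into $(\alpha_t, \beta_t)$, and every image save possibly the one coming from $c$ itself must lie in $\overline{W} \cap (\alpha_t, \beta_t)$. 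This yields the sparsity bound $|C \cap (n_c - \beta_t, n_c - \alpha_t)| \leq H + 1$: under the minimality assumption $C$ has at most $H + 1$ elements in windows whose length tends to infinity.

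The heart of the argument is to convert this sparsity into a contradiction by producing a removable element of $C$. My plan is to choose $c_0 \in C$ carefully, for instance near the beginning of a long run $(\alpha_t, \overline{w_{i_t+1}}) \subseteq W$ at a suitably large scale $t$, and then verify directly that $c_0$ is in fact removable: for every $w \in W$ I would exhibit a $c' \in C \setminus \{c_0\}$ with $c_0 + w - c' \in W$. The long $W$-runs provide the needed freedom when $c_0 + w$ lands deep inside such a run, while the few available elements of $C$ inside the window $(n_{c_0} - \beta_t, n_{c_0} - \alpha_t)$ serve as pivots for constructing alternative representations.

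The main obstacle is this last routing step. When $c_0 + w$ falls deep inside a long $W$-run the alternative representation is straightforward. The delicate cases are when the natural alternative would land on one of the at most $H$ elements of $\overline{W} \cap (\alpha_t, \beta_t)$, or when $w$ is outside the scale controlled by the chosen $t$. Dealing with these ``bad'' $w$ is precisely where the two parts of the hypothesis combine: $\overline{w_{i_t+1}} - \overline{w_{i_t}} \to +\infty$ supplies the long runs needed to find pivots, while $\limsup |\overline{W} \cap (\overline{w_{i_t}}, \overline{w_{i_{t+1}}})| < +\infty$ guarantees that only boundedly many exceptional positions ever need to be dodged. Making this precise should extend the argument of Chen--Yang for Theorem B($b$) from the case where every $\overline{W}$-gap grows to the more general setting treated here.
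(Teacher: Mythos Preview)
Your opening observations are correct and match the paper's setup: minimality gives each $c\in C$ a witness $n_c$ with $n_c-c'\in\overline{W}$ for every $c'\in C\setminus\{c\}$ with $c'<n_c$, and from this you correctly extract the sparsity bound $|C\cap(n_c-\beta_t,n_c-\alpha_t)|\le H+1$.

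The gap is in your finishing plan. You propose to pick a specific $c_0$ and then, for \emph{every} $w\in W$, construct an alternative representation of $c_0+w$. You correctly identify this routing step as the main obstacle, but you have not supplied a mechanism for it, and the difficulty is real: $w$ ranges over an infinite set, $c_0+w$ lands at arbitrary scales, and your sparsity bound is only available in windows of the specific form $(n_c-\beta_t,n_c-\alpha_t)$ tied to witnesses, not uniformly along $\Z$. Nothing in your outline explains how to produce the needed $c'\in C\setminus\{c_0\}$ when $c_0+w$ sits near the edge of a $W$-run or at a scale your chosen $t$ does not control.

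The paper avoids this entirely with a pigeonhole argument that runs in the opposite direction. Instead of fixing one $c_0$ and varying $w$, it fixes $K+2$ witnesses $n_{s_{0,1}},\dots,n_{s_{0,K+2}}$ coming from $K+2$ distinct elements of $C$, and then chooses a \emph{single} $c_s\in C$ far to the left. For each $j$ one has $n_{s_{0,j}}-c_s\in\overline{W}$, and the point is to show that all $K+2$ of these values land in one window $(\overline{w_{i_{u_s-1}}},\overline{w_{i_{u_s}+1}})$ (or a close variant), which meets $\overline{W}$ in at most $K+1$ points. Two of the $n_{s_{0,j}}$ therefore coincide, contradicting the witness property since they came from distinct $c_{s_{0,j}}$. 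The technical work lies in arranging that the $K+2$ shifted values fall into the same window; the paper handles this by a three-case split according to where $-c_s$ sits relative to the markers $\overline{w_{i_{u_s-1}+1}}$ and $\overline{w_{i_{u_s}+1}}$. This sidesteps the need to account for all $w\in W$ and is the idea your proposal is missing.
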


It would be of interest to show via the following example that our new theorem has some advantages comparing with {\bf Theorem B}.

\begin{example}
Let
$$
W=\{1\}\cup \bigcup\limits_{k=4}^{+\infty}\big[2^k+9,2^{k+1}\big)=\{1=w_1<w_2<\cdots\}.
$$
So, it is plain that
$$
\overline{W}=\mathbb{Z}^+\setminus W=\bigcup\limits_{k=1}^{+\infty}\big[2^k,2^k+8\big]=\{\overline{w_1}<\overline{w_2}<\cdots\}.
$$
Then, clearly we have
$$
\limsup_{i\rightarrow+\infty}(w_{i+1}-w_i)\le 10 \quad \text{and} \quad
\liminf_{i\rightarrow+\infty}(\overline{w_{i+1}}-\overline{w_i})\le 1.
$$
Thus, both {\bf Theorem B}($a$) and {\bf Theorem B}($b$) fail to determine whether such a $W$ has a minimal additive complement. However, it is easily to see that
$$
\lim_{k\rightarrow\infty}\Big(2^{k+1}-\big(2^k+8\big)\Big)=+\infty
$$
and
$$
\lim_{k\rightarrow\infty}\big|\overline{W}\cap\big(2^k+8,2^{k+1}+8\big)\big|=8.
$$
Therefore, we know that $W$ does not have a minimal additive complement, thanks to the criterion from Theorem \ref{thm1} with $\overline{w_{i_{t}}}=2^k+8$ and $\overline{w_{i_{t+1}}}=2^{k+1}+8$.

\end{example}

\begin{remark}
We now give a simple explanation that the criterion given in Theorem \ref{thm1} is essentially optimal. We first note that
$$
\limsup_{i\rightarrow+\infty}(\overline{w_{i+1}}-\overline{w_i})\ge \lim_{t\rightarrow+\infty}\overline{w_{i_t+1}}-\overline{w_{i_t}}=+\infty
$$
for those $W$ assumed in Theorem \ref{thm1} which are consistent with the requirement of Problem \ref{pro2}. We next show that the condition
$$
\limsup_{t\rightarrow+\infty}\big|\overline{W}\cap(\overline{w_{i_{t}}},\overline{w_{i_{t+1}}})\big|<+\infty
$$
cannot be neglected.
To see this, let $W=\bigcup\limits_{k=0}^{+\infty} [10^k,2\times 10^k]$. It is clear that for this $W$ there exists a sequences $i_1<i_2<\cdots$ with $\overline{w_{i_t}}=10^{t+1}-1$ such that
$$
\lim_{t\rightarrow+\infty}\left(\overline{w_{i_t+1}}-\overline{w_{i_t}}\right)=+\infty \quad  \text{and} \quad \limsup_{t\rightarrow+\infty} \big|\overline{W}\cap(\overline{w_{i_{t}}},\overline{w_{i_{t+1}}})\big|=+\infty.
$$
Then there exists a minimal additive complement to $W$ by {\bf Theorem B}($a$) since  $\limsup_{i\rightarrow+\infty}(w_{i+1}-w_i)=+\infty$.
\end{remark}

Along the same line, Kiss, S\'{a}ndor and Yang \cite{KSY2019} investigated certain sets $W$ with
$$
\limsup_{i\rightarrow+\infty}(w_{i+1}-w_i)<+\infty
$$
which have a minimal complement. Precisely, let $X$ be a set of integers. If there exists a positive integer $T$ such that $x+T\in X$ for all sufficiently large integers $x\in X$, then $X$ is said to be eventually periodic with period $T$. If $W$ is eventually periodic with $|\mathbb{N}\setminus W|=+\infty$, then there is some integer $T_W$ so that $w+T_W\in W$ for sufficiently large $w\in W$, which means that
$$
\limsup_{i\rightarrow+\infty}(w_{i+1}-w_i)\le T_W<+\infty.
$$
So, Kiss, S\'{a}ndor and Yang focused on eventually periodic sets. Let $m$ be a positive integer and $A$ a set. We appoint that
$$
A \pmod{m}=\big\{i\in [0,m-1]:i\equiv a\pmod{m} ~\text{for~some}~a\in A\big\}.
$$
Now, suppose that $W$ is an eventually periodic set with period $m$.
By shifting a number, we may assume, without loss generality, that $W$ has the following structure:
\begin{equation}
\label{eq1}W=(m\mathbb{N}+X_m)\cup Y^{(0)}\cup Y^{(1)},
\end{equation}
where $X_m\subseteq \{0,1,\cdots,m-1$\}, $Y^{(0)}\subseteq \mathbb{Z}^{-}$, $Y^{(1)}$ are finite sets with $Y^{(0)}\pmod{m}\subseteq X_m$ and $\big(Y^{(1)}\pmod{m}\big)\cap X_m=\emptyset$. Here, $\mathbb{Z}^{-}$ represents the set of all negative integers. Kiss, S\'{a}ndor and Yang \cite[Theorems 1 and 2]{KSY2019} gave a  necessary and sufficient condition, respectively, for eventually periodic sets $W$ which have a minimal additive complement.

\begin{theoremD}[Kiss--S\'andor--Yang]\label{thD}
Let $W$ be defined in \eqref{eq1}. If there exists  a minimal complement to $W$, then there exists some $C\subseteq \{0,1,\cdots,m-1\}$ such that the following two statements hold:\\
\rm{($a$)} $C+\big(X_m\cup Y^{(1)}\big)\pmod{m}=\{0,1,\cdots,m-1\}$;\\
\rm{($b$)} For any given $c\in C$, there exists some $y\in Y^{(1)}$ such that
$$
c+y\not\equiv c'+x\pmod{m},
$$
where $c'\in C$ and $x\in X_m$.
\end{theoremD}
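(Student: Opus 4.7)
My plan is to produce $C$ from the residues of ``deep'' elements of the minimal complement and verify the two properties separately. Let $C^{*}$ be the given minimal complement to $W$ and set
$$
C:=\big\{r\in\{0,1,\dots,m-1\}:C^{*}\cap(r+m\mathbb{Z})\text{ is unbounded below}\big\},
$$
rather than the naive choice $C^{*}\pmod m$; the advantage is that for every $c\in C$ there are arbitrarily small elements of $C^{*}$ in the class $c+m\mathbb{Z}$, and this will be used repeatedly below. Property~(a) comes from inspecting very negative integers: because $Y^{(0)}$ and $Y^{(1)}$ are finite and $m\mathbb{N}+X_{m}$ is bounded below, $W$ itself is bounded below, so for any residue $r$ and any $z\equiv r\pmod m$ with $z$ sufficiently negative, a representation $z=c^{*}+w$ forces $c^{*}$ to be correspondingly negative, hence $c^{*}\pmod m\in C$. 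Since $Y^{(0)}\pmod m\subseteq X_{m}$, the residue of $w$ lies in $X_{m}\cup(Y^{(1)}\pmod m)$, so $r\in C+(X_{m}\cup Y^{(1)})\pmod m$; running $r$ through all of $\{0,1,\dots,m-1\}$ yields~(a).

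For~(b) I would argue by contradiction. Suppose some $c\in C$ fails~(b), that is, for every $y\in Y^{(1)}$ there exist $c'(y)\in C$ and $x(y)\in X_{m}$ with $c+y\equiv c'(y)+x(y)\pmod m$. Fix any $c_{0}\in C^{*}\cap(c+m\mathbb{Z})$; the plan is to show that $C^{*}\setminus\{c_{0}\}$ is still a complement to $W$, contradicting minimality of $C^{*}$. Given $z=c_{0}+w$ with $w\in W$ (the only potentially problematic $z$), I would construct $z=\tilde c+\tilde w$ with $\tilde c\in C^{*}\setminus\{c_{0}\}$ and $\tilde w\in m\mathbb{N}+X_{m}\subseteq W$, handling three cases according to the form of $w$. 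If $w=mk+x\in m\mathbb{N}+X_{m}$, select $\tilde c\in C^{*}\cap(c+m\mathbb{Z})\setminus\{c_{0}\}$ with $\tilde c\leq z-x$, which exists because $c\in C$. If $w\in Y^{(0)}$, the inclusion $Y^{(0)}\pmod m\subseteq X_{m}$ rewrites $z\equiv c+x'\pmod m$ for some $x'\in X_{m}$, and the same device applies. If $w=y\in Y^{(1)}$, invoke the assumed failure of~(b) to obtain $c+y\equiv c'(y)+x(y)\pmod m$, then pick $\tilde c\in C^{*}\cap(c'(y)+m\mathbb{Z})\setminus\{c_{0}\}$ with $\tilde c\leq z-x(y)$; such $\tilde c$ exists because $c'(y)\in C$. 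In each case, $z-\tilde c$ is a non-negative integer congruent to an element of $X_{m}$ modulo $m$, hence lies in $m\mathbb{N}+X_{m}\subseteq W$.

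The main obstacle is the third case, which is precisely where the hypothetical failure of~(b) gets consumed: without that hypothesis there would in general be no way to re-route the contribution $c_{0}+Y^{(1)}$ through a different residue class of $C^{*}$. The technical backbone shared by all three cases is the availability of arbitrarily small elements of $C^{*}$ in the relevant residue class, which is exactly why $C$ is chosen to be the set of residues along which $C^{*}$ is unbounded below, rather than the bare set $C^{*}\pmod m$. Once these three replacements are in hand, $C^{*}\setminus\{c_{0}\}$ is seen to complement $W$, contradicting minimality, and therefore~(b) must hold.
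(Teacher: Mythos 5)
Your proof is correct. Note that the paper itself does not prove Theorem D --- it is quoted from Kiss--S\'andor--Yang \cite{KSY2019} --- but your argument is sound and in fact uses the same key device that both \cite{KSY2019} and this paper's own Proposition \ref{thm2} rely on: taking $C$ to be the set of residues $r$ for which $C^{*}\cap(r+m\mathbb{Z})$ is unbounded below (this is exactly the set $J$ in the proof of Proposition \ref{thm2}), and then re-routing any representation $z=c_{0}+w$ through a sufficiently negative element of an appropriate class so that the new $\tilde w=z-\tilde c$ lands in $m\mathbb{N}+X_{m}$. The only step worth making explicit is the inference ``$c^{*}$ sufficiently negative $\Rightarrow c^{*}\pmod{m}\in C$'' in part (a): since the finitely many classes $C^{*}\cap(r+m\mathbb{Z})$ with $r\notin C$ are each bounded below, they admit a common lower bound $B$, and any $c^{*}=z-w\le z-\inf W<B$ must then lie in a class indexed by $C$; with that one line added, the proof is complete.
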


\begin{theoremE}[Kiss--S\'andor--Yang]\label{thE}
Let $W$ be defined in \eqref{eq1}. Suppose that there exists some $C\subseteq \{0,1,\cdots,m-1\}$ such that the following two conditions hold:\\
\rm{($a$)} $C+\big(X_m\cup Y^{(1)}\big)\pmod{m}=\{0,1,\cdots,m-1\}$;\\
\rm{($b$)} For any given $c\in C$, there exists some $y\in Y^{(1)}$ such that
$$
c+y\not\equiv c'+x\pmod{m},
$$ where $c'\in C\setminus\{c\}$ and $x\in X_m\cup Y^{(1)}$.

Then there exists a minimal complement to $W$.
\end{theoremE}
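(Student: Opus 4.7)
The plan is to build a minimal complement of the shape
$$
C^{*}=\bigcup_{c\in C}\bigl(mK_{c}+c\bigr),
$$
where each $K_{c}\subseteq\mathbb{Z}$ is selected so that $C^{*}+W=\mathbb{Z}$ and every $c^{*}\in C^{*}$ is essential. For each $c\in C$, fix the witness $y_{c}\in Y^{(1)}$ supplied by~(b), let $S_{c}=Y^{(1)}\cap(y_{c}+m\mathbb{Z})$, and form the finite set $D_{c}=\{(w-y_{c})/m:w\in S_{c}\}\subseteq\mathbb{Z}$, which contains $0$. Applying Nathanson's theorem to the finite set $D_{c}$ yields a minimal complement $K_{c}$ of $D_{c}$ in $\mathbb{Z}$, necessarily unbounded in both directions.

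Essentiality is the clean part, so I would handle it first. Given $c^{*}=mk+c\in C^{*}$ with $k\in K_{c}$, minimality of $K_{c}$ with respect to $D_{c}$ supplies some $k'\in\mathbb{Z}$ with $(k'-D_{c})\cap K_{c}=\{k\}$; set $n=mk'+c+y_{c}$. In any representation $n=c'^{*}+w'$ with $c'^{*}\in C^{*}$, $w'\in W$, the residue $n\equiv c+y_{c}\pmod m$ together with $(Y^{(1)}\bmod m)\cap X_{m}=\emptyset$ rules out $w'\in m\mathbb{N}+X_{m}$ and $w'\in Y^{(0)}$, forcing $w'\in Y^{(1)}$ with $w'\equiv y_{c}\pmod m$, i.e.\ $w'=y_{c}+m\delta$ with $\delta\in D_{c}$. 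Condition~(b) then forces $c'^{*}\equiv c\pmod m$, so $c'^{*}=mj+c$ with $j\in K_{c}$, and $n=c'^{*}+w'$ gives $j=k'-\delta\in(k'-D_{c})\cap K_{c}=\{k\}$. Hence $c'^{*}=c^{*}$ uniquely, so $c^{*}$ is essential.

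Coverage is then verified residue by residue using~(a). For $n\in\mathbb{Z}$ with residue $r\equiv c_{0}+s\pmod m$, $s\in X_{m}\cup Y^{(1)}$, I would exhibit $c^{*}\in mK_{c_{0}}+c_{0}$ with $n-c^{*}\in W$. When $s\in X_{m}$ this reduces to choosing an element of $K_{c_{0}}$ small enough that $w=n-c^{*}$ lies in $m\mathbb{N}+X_{m}$, which is possible because $K_{c_{0}}$ is unbounded below. The main obstacle is \emph{stray} residues $r\in(C+Y^{(1)})\setminus(C+X_{m})\pmod m$ that are not of the form $c+y_{c}$ for any $c\in C$: here $w$ must lie in the finite set $Y^{(1)}$, and the Nathanson-minimal $K_{c_{0}}$ may be too sparse to cover every $n\equiv r\pmod m$ through a single decomposition of $r$. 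To surmount this I would spread the covering burden across all decompositions $r\equiv c_{0}+y_{0}\pmod m$ simultaneously, and when that is still insufficient, enlarge the relevant $K_{c}$ while transferring its essentiality witness from the residue $c+y_{c}$ to a stray residue whose $Y^{(1)}$-representative is lonely; a bookkeeping argument using the finiteness of $Y^{(1)}$ and~(a) closes this last gap.
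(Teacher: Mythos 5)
A preliminary remark: the paper only quotes this statement from Kiss--S\'andor--Yang \cite{KSY2019} and gives no proof of it, so there is no in-paper argument to compare yours with; I can only assess your sketch on its own. The parts you work out are sound: by (b) and $(Y^{(1)}\bmod m)\cap X_m=\emptyset$, every representation of an integer in the class $c+y_c+m\mathbb{Z}$ must use an element of $C^*$ congruent to $c$ and an element of $S_c$, so the witness $k'$ with $(k'-D_c)\cap K_c=\{k\}$ coming from minimality of $K_c$ relative to $D_c$ does certify that $c^*=mk+c$ is essential; coverage of the classes in $C+X_m\pmod m$ (via unboundedness below of each $K_c$) and of the special classes $c+y_c$ is also fine.

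The gap is exactly the one you flag, and your proposed remedy does not close it. For a stray class $r\in\bigl(C+Y^{(1)}\bigr)\setminus\bigl(C+X_m\bigr)\pmod m$, coverage requires that the union of $K_{c'}+E_{c',y'}$ over the finitely many admissible pairs $(c',y')$ be all of $\mathbb{Z}$, and Nathanson-minimal complements to the sets $D_{c'}$ need not achieve this. A concrete instance: $m=3$, $X_m=\{0\}$, $Y^{(0)}=\emptyset$, $Y^{(1)}=\{1,2,4\}$, $C=\{0\}$ (conditions (a), (b) hold, (b) vacuously). If the witness chosen from (b) is $y_0=1$, then $D_0=\{0,1\}$ and a minimal complement such as $K_0=2\mathbb{Z}$ leaves half of the class $2+3\mathbb{Z}$ uncovered, and no minimal complement to $\{0,1\}$ can cover it, since that would force $K_0=\mathbb{Z}$, which is not minimal for $\{0,1\}$; the correct complement here is $3\mathbb{Z}$, whose elements are essential through the \emph{stray} class, not through $c+y_c$. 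This shows the recipe ``fix the (b)-witness, take a Nathanson-minimal $K_c$ for $D_c$'' fails as stated, and your patch is not a proof: (i) enlarging $K_c$ destroys precisely the certificates you rely on, since the witness $k'$ with $(k'-D_c)\cap K_c=\{k\}$ may acquire new representations from the added elements; (ii) essentiality transferred to a stray class is not available in general, because a stray class may be reachable through several pairs $(c',y')$ with distinct $c'$, and nothing analogous to (b) guarantees uniqueness of representation there; (iii) the ``bookkeeping argument using the finiteness of $Y^{(1)}$ and (a)'' that must reconcile coverage of all stray classes with essentiality of every element of every $K_c$ is exactly the substance of the theorem, and it is not supplied. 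So the proposal has a genuine unresolved gap at its central point.
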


For other related results about minimal additive complements, one may also refer to the works \cite{AL2021,BS2021,BL2023,CF2011,D1935,KSY2019,K2019,R2001,W1975,Z2023}.

Our second aim in this article is a new result on the situation 
$$
\limsup_{i\rightarrow+\infty}(\overline{w_{i+1}}-\overline{w_i})<+\infty
$$
(comparing this with {\bf Theorem B}(b)). Following \eqref{eq1}, we write
\begin{align}\label{eq-new1}
W=(m\mathbb{N}+X_m)\cup Y^{(0)}\cup Y^{(1)},
\end{align}
where $X_m\subseteq \{0,1,\cdots,m-1$\}, $Y^{(0)}\subseteq \mathbb{Z}^{-}$ is a finite set with $Y^{(0)}\pmod{m}\subseteq X_m$ and $Y^{(1)}$ is a set with $\big(Y^{(1)}\pmod{m}\big)\cap X_m=\emptyset$. So, we would have
$\limsup_{i\rightarrow+\infty}(\overline{w_{i+1}}-\overline{w_i})<+\infty$
by this setting. It is worth mentioning that in our setting here the set $Y^{(1)}$ does not need to be finite which is slightly more general than the ones considered by Kiss, S\'andor and Yang above. Now, we state below our second theorem.

\begin{theorem}\label{newtheorem} Let $W=(m\mathbb{N}+X_m)\cup Y^{(0)}\cup Y^{(1)}$ be as in \eqref{eq-new1} and $k$ a positive integer. If $Y^{(1)}=D+mk\mathbb{N}$ with $D\subseteq[0,m-1]$, then the set $W$ does not have a minimal additive complement.
\end{theorem}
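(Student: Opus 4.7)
The plan is to apply \textbf{Theorem D} after rewriting $W$ with the larger period $m':=mk$, chosen so that the periodic set $Y^{(1)}=D+mk\mathbb{N}$ gets completely absorbed into the arithmetic-progression piece.

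The preparatory step is the identity
\[
(m\mathbb{N}+X_m)\cup (D+mk\mathbb{N}) \;=\; m'\mathbb{N}+\widetilde{X},
\]
where
\[
\widetilde{X}:=\{x+mj:\, x\in X_m,\ 0\le j<k\}\cup D \;\subseteq\; \{0,1,\dots,m'-1\}.
\]
This follows by applying the Euclidean division $j=qk+r$ on the multipliers appearing in $m\mathbb{N}+X_m$ and by observing that $D+mk\mathbb{N}\subseteq m'\mathbb{N}+\widetilde{X}$ holds trivially. Consequently, $W$ admits the alternative representation
\[
W\;=\;(m'\mathbb{N}+\widetilde{X})\cup Y^{(0)},
\]
which fits the shape of \eqref{eq1} at period $m'$, with residue set $X_{m'}:=\widetilde{X}$, the same finite negative part $Y^{(0)}$, and the trivial choice $Y^{(1)}:=\emptyset$ for the non-periodic positive part. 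The compatibility condition $Y^{(0)}\pmod{m'}\subseteq \widetilde{X}$ is immediate: any $y\in Y^{(0)}$ has residue $y\bmod m\in X_m$, so its residue modulo $m'$ has the form $x+mj$ with $x\in X_m$ and $0\le j<k$, hence lies in $\widetilde{X}$; the other condition in \eqref{eq1} is vacuous for $Y^{(1)}=\emptyset$.

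With this reformulation in hand, I would argue by contradiction. Suppose $W$ has a minimal additive complement. Then \textbf{Theorem D}, applied to the new representation at period $m'$, provides a set $C\subseteq\{0,1,\dots,m'-1\}$ with
\[
C+\widetilde{X}\pmod{m'}=\{0,1,\dots,m'-1\},
\]
which already forces $C\neq\emptyset$, together with the further demand that for every $c\in C$ there exists some $y\in Y^{(1)}=\emptyset$ witnessing a specified non-congruence. The latter is manifestly impossible, producing the desired contradiction.

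The only substantive step is the reformulation identity, which is purely combinatorial; thereafter the conclusion falls out by invoking the contrapositive of \textbf{Theorem D}. The novelty of Theorem~\ref{newtheorem} lies precisely in recognising that an infinite $Y^{(1)}$ of the shape $D+mk\mathbb{N}$ can be absorbed into a larger base period, bringing the problem within the scope of Kiss--S\'andor--Yang's framework and causing the non-trivial second condition of \textbf{Theorem D} to fail automatically.
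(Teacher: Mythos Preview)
Your argument is correct and is genuinely different from the paper's route. The paper proves Theorem~\ref{newtheorem} by first establishing a general Proposition: if $S$ is any set with the property that whenever $(-\infty,n_G]\subseteq G+S$ one has $(G\setminus\{g\})+S=G+S$ for every $g\in G$, then $W$ with $Y^{(1)}=D+mS$ has no minimal complement. The proof of that proposition is a direct contradiction argument, splitting the residue classes of a putative minimal complement $C$ modulo $m$, distinguishing the classes $J$ that meet $\mathbb{Z}^{-}$ infinitely often, and then running a careful case analysis on whether $\mathcal{C}_2\subseteq\mathcal{C}_1$ (in the paper's notation) to locate an element of $C$ that can be removed. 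Theorem~\ref{newtheorem} is then derived by checking that $S=k\mathbb{N}$ satisfies the hypothesis of the proposition.

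Your approach bypasses all of this by the single observation that $Y^{(1)}=D+mk\mathbb{N}$ is itself eventually periodic with period $m'=mk$, so it can be absorbed into the arithmetic-progression part. After the rewrite $W=(m'\mathbb{N}+\widetilde{X})\cup Y^{(0)}\cup\emptyset$, Theorem~D applies verbatim and its condition~(b) is vacuously violated because the new $Y^{(1)}$ is empty while condition~(a) forces $C\neq\emptyset$. The verification that $Y^{(0)}\pmod{m'}\subseteq\widetilde{X}$ is exactly as you say. This is strictly shorter and more conceptual for the specific statement; indeed it shows that the authors' remark that Theorem~D ``cannot easily decide'' this $W$ is too pessimistic once one allows a change of period. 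What the paper's longer route buys is the more general Proposition, which covers $Y^{(1)}=D+mS$ for any $S$ with the stated absorption property, not only $S=k\mathbb{N}$; your period-change trick does not extend to such $S$ in general.
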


Note that $Y^{(1)}$ in Theorem \ref{newtheorem} is not finite. So, it can not be easily decided whether $W$ has a minimal additive complement by {\bf Theorem D} and {\bf Theorem E}.  

The paper will be organized as follows: In section 2, we prove Theorem \ref{thm1}. In section 3, we first give a proposition and then deduce Theorem \ref{newtheorem} from it.

\section{Proof of Theorem \ref{thm1}}
\begin{proof}[Proof of Theorem \ref{thm1}]
Suppose the contrary, we assume that $C$ is a minimal additive complement to $W$. Then we clearly have $\inf C=-\infty$. Let
$$C\cap (-\infty,-\overline{w_{i_1+1}})=\{c_1>c_2>\cdots\}.$$
For any $s\in \mathbb{Z}^+$, we assume that
\begin{align}\label{eq2-1}
\overline{w_{i_{u_s-1}+1}}\leq -c_s<\overline{w_{i_{u_s}+1}}.
\end{align}
Since $\limsup_{t\rightarrow+\infty}\big|\overline{W}\cap(\overline{w_{i_{t}}},\overline{w_{i_{t+1}}})\big|<+\infty$, there is an absolute constant $K$ so that
$$
\limsup_{t\rightarrow+\infty}\big|\overline{W}\cap(\overline{w_{i_{t}}},\overline{w_{i_{t+1}}})\big|=K,
$$
which means that there is some $t_W$ so that for any $t\ge t_W$ we have
\begin{align}\label{eq2-2}
\big|\overline{W}\cap(\overline{w_{i_{t}}},\overline{w_{i_{t+1}}})\big|\le K.
\end{align}
Our proof will be separated into the following three cases.

{\bf Case 1.} $\liminf_{s\rightarrow+\infty} (-c_s-\overline{w_{i_{u_s-1}+1}})<+\infty$. Let
$$
h:=\liminf_{s\rightarrow+\infty} (-c_s-\overline{w_{i_{u_s-1}+1}}).
$$
Then $0\le h<+\infty$. Define
$$
H:=\{s:-c_s-\overline{w_{i_{u_s-1}+1}}=h\}.
$$
It is clear that $H$ is an infinite set since $-c_s-\overline{w_{i_{u_s-1}+1}}$ are all integers.
Let $s_0\in H$ be any given integer. Then there exist integers $n_{s_0}\in \mathbb{Z}$ and $w_{s_0}\in W$ such that
\begin{align}\label{eq2-3}
n_{s_0}=c_{s_0}+w_{s_0} \quad \text{but} \quad n_{s_0}\neq c+w
\end{align}
for any $w\in W$ and $c\in C$ with $c\neq c_{s_0}$,
otherwise $C\setminus\{c_{s_0}\}$ will be an additive complement to $W$ which contradicts with the minimal property of $C$.
Recall that $\inf C=-\infty$ and
$$
\lim_{t\rightarrow+\infty}\left(\overline{w_{i_t+1}}-\overline{w_{i_t}}\right)=+\infty
$$
from the condition of our theorem. There exists an integer $s_0^*> s_0$ such that
\begin{align}\label{eq2-4}
n_{s_0}-c_{s}>0
\end{align}
for all $s\in H$ with $s\ge s_0^*$ and
\begin{align}\label{eq2-5}
\overline{w_{i_t+1}}-\overline{w_{i_t}}>|n_{s_0}|+h
\end{align}
for all $t\geq u_{s_0^*}-1$.
From (\ref{eq2-3}) and (\ref{eq2-4}), we have
\begin{align}\label{eq2-6}
n_{s_0}-c_{s}\in \overline{W}
\end{align}
for $s\ge  s_0^*$. From (\ref{eq2-5}) with $t=u_s$ we have
\begin{align*}
\overline{w_{i_{u_s}+1}}-\overline{w_{i_{u_s}}}>|n_{s_0}|+h=|n_{s_0}|-c_s-\overline{w_{i_{u_s-1}+1}}\geq|n_{s_0}|-c_s-\overline{w_{i_{u_s}}}£¬
\end{align*}
providing that $s\ge s_0^*$.
In other words,
\begin{align}\label{eq2-7}
n_{s_0}-c_s\leq |n_{s_0}|-c_s<\overline{w_{i_{u_s}+1}}.
\end{align}
On the other hand, we know again from (\ref{eq2-5}) with $t=u_s-1$ that
\begin{align}\label{eq2-8}
\overline{w_{i_{u_s-1}+1}}-\overline{w_{i_{u_s-1}}}>|n_{s_0}|+h\geq -n_{s_0}.
\end{align}
So by (\ref{eq2-1}) and (\ref{eq2-8}), we have
\begin{align}\label{eq2-9}
n_{s_0}-c_s\ge n_{s_0}+\overline{w_{i_{u_{s}-1}+1}}>\overline{w_{i_{u_{s}-1}}},
\end{align}
provided that $s\ge s_0^*$.
Hence, we conclude from (\ref{eq2-6}), (\ref{eq2-7}) and (\ref{eq2-9}) that
\begin{align}\label{eq2-10}
\overline{w_{i_{u_{s}-1}}}<n_{s_0}-c_s<\overline{w_{i_{u_s}+1}} \quad \text{and} \quad n_{s_0}-c_{s}\in \overline{W}
\end{align}
for all $s_0,s\in H$ with $s\ge s_0^*> s_0$.

Now, we chose $K+2$ different integers $s_{0,1},s_{0,2},...,s_{0,K+2}\in H$. Then there are $K+2$ integers $s_{0,1}^*,s_{0,2}^*,...,s_{0,K+2}^*$ so that for any $1\le j\le K+2$ we have
\begin{align}\label{eq2-11}
\overline{w_{i_{u_{s}-1}}}<n_{s_{0,j}}-c_s<\overline{w_{i_{u_s}+1}} \quad \text{and} \quad n_{s_{0,j}}-c_{s}\in \overline{W} \quad\quad  (\forall ~s\ge s_{0,j}^*)
\end{align}
from (\ref{eq2-10}). Now let $s$ be a sufficiently large integer so that
$$s>\max\left\{s_{0,1}^*,s_{0,2}^*,...,s_{0,K+2}^*\right\}$$
and $u_s-1\ge t_W$. Then for such $s$ we have
$$
n_{s_{0,j}}-c_s\in \overline{W}\cap(\overline{w_{i_{u_s-1}}},\overline{w_{i_{u_s}+1}}) \quad \quad (\forall ~1\le j\le K+2)
$$
from (\ref{eq2-11}). Moreover, we know from (\ref{eq2-2}) that
\begin{align*}
\big|\overline{W}\cap(\overline{w_{i_{u_s-1}}},\overline{w_{i_{u_s}+1}})\big|
= \big|\overline{W}\cap(\overline{w_{i_{u_s-1}}},\overline{w_{i_{u_s}}})\big|+\big|\overline{W}\cap[\overline{w_{i_{u_s}}},\overline{w_{i_{u_{s}}+1}})\big|
\le K+1,
\end{align*}
from which it follows that there at least two integers
$$
1\le j_1<j_2\le K+2
$$
such that
$$
n_{s_{0,{j_1}}}-c_s=n_{s_{0,{j_2}}}-c_s, \quad \text{i.e.,} \quad n_{s_{0,{j_1}}}=n_{s_{0,{j_2}}}.
$$
By (\ref{eq2-3}), we clearly have
$$
n_{s_{0,{j_1}}}=c_{s_{0,{j_1}}}+w_{s_{0,{j_1}}} \quad \text{and} \quad n_{s_{0,{j_2}}}=c_{s_{0,{j_2}}}+w_{s_{0,{j_2}}},
$$
which is certainly a contradiction with (\ref{eq2-3}) since $n_{s_{0,{j_1}}}=n_{s_{0,{j_2}}}$ whereas $c_{s_{0,{j_1}}}\neq c_{s_{0,{j_2}}}$.

{\bf Case 2.} $\liminf_{s\rightarrow+\infty} (c_s+\overline{w_{i_{u_s}+1}})<+\infty$. The proof is similar to {\bf Case 1}. Let
$$
\ell:=\liminf_{s\rightarrow+\infty} (c_s+\overline{w_{i_{u_s}+1}}).
$$
Then $0\le \ell <+\infty$. Define
$$
L:=\{s:c_s+\overline{w_{i_{u_s}+1}}=\ell\}.
$$
It is clear that $L$ is an infinite set since $c_s+\overline{w_{i_{u_s}+1}}$ are all integers.
Let $s_0\in H$ be any given integer. Then there exist integers $n_{s_0}\in \mathbb{Z}$ and $w_{s_0}\in W$ such that
\begin{align}\label{eq2-12}
n_{s_0}=c_{s_0}+w_{s_0} \quad \text{but} \quad n_{s_0}\neq c+w
\end{align}
for any $w\in W$ and $c\in C$ with $c\neq c_{s_0}$,
otherwise $C\setminus\{c_{s_0}\}$ will be an additive complement to $W$ which contradicts with the minimal property of $C$. Since $\inf C=-\infty$ and
$$
\lim_{t\rightarrow+\infty}\left(\overline{w_{i_t+1}}-\overline{w_{i_t}}\right)=+\infty
$$
from the condition of our theorem, there exists an integer $s_0^*> s_0$ such that
\begin{align}\label{eq2-13}
n_{s_0}-c_{s}>0
\end{align}
for all $s\in L$ with $s\ge s_0^*$ and
\begin{align}\label{eq2-14}
\overline{w_{i_t+1}}-\overline{w_{i_t}}>|n_{s_0}|+\ell
\end{align}
for all $t\geq u_{s_0^*}$. From (\ref{eq2-12}) and (\ref{eq2-13}), we have
\begin{align}\label{eq2-15}
n_{s_0}-c_{s}\in \overline{W}
\end{align}
for $s\ge  s_0^*$. By (\ref{eq2-14}) with $t=u_s+1$, we have
\begin{align}\label{eq2-16}
\overline{w_{i_{u_s+1}+1}}-\overline{w_{i_{{u_s}+1}}}>|n_{s_0}|+\ell\geq n_{s_0},
\end{align}
providing that $s\ge s_0^*$. From (\ref{eq2-1}) and (\ref{eq2-16}) for $s\ge s_0^*$ we have
\begin{align}\label{eq2-17}
n_{s_0}-c_s<n_{s_0}+\overline{w_{i_{u_s}+1}}\leq n_{s_0}+\overline{w_{i_{u_s+1}}}<\overline{w_{i_{u_s+1}+1}}.
\end{align}
On the other hand, using (\ref{eq2-16}) again we get
$$
\overline{w_{i_{u_s}+1}}-\overline{w_{i_{u_s}}}>|n_{s_0}|+\ell= |n_{s_0}|+c_s+\overline{w_{i_{u_s}+1}},
$$
by the definition of $\ell$, from which it follows that
\begin{align}\label{eq2-18}
n_{s_0}-c_s \geq -|n_{s_0}|-c_s>\overline{w_{i_{u_{s}}}},
\end{align}
provided that $s\ge s_0^*$. Hence, we conclude from (\ref{eq2-15}), (\ref{eq2-17}) and (\ref{eq2-18}) that
\begin{align}\label{eq2-19}
\overline{w_{i_{u_{s}}}}<n_{s_0}-c_s<\overline{w_{i_{u_s+1}+1}} \quad \text{and} \quad n_{s_0}-c_{s}\in \overline{W}
\end{align}
for all $s_0,s\in L$ with $s\ge s_0^*> s_0$.

Now, the remaining arguments are the same to the last paragraph of {\bf Case 1} , so we just omit them.

{\bf Case 3.} $
\liminf_{s\rightarrow+\infty} (-c_s-\overline{w_{i_{u_s-1}+1}})=
\liminf_{s\rightarrow+\infty} (c_s+\overline{w_{i_{u_s}+1}})=+\infty.
$
Since $C$ is a minimal complement to $W$, then for any given integer $s_0\geq1$ there exist integers $n_{s_0}\in \mathbb{Z}$ and $w_{s_0}\in W$ such that
\begin{align}\label{eq2-20}
n_{s_0}=c_{s_0}+w_{s_0} \quad \text{but} \quad n_{s_0}\neq c+w
\end{align}
for any $w\in W$ and $c\in C$ with $c\neq c_{s_0}$. By conditions of {\bf Case 3},
there exist infinitely many integers $s>s_0$ such that
$$
-c_{s}-\overline{w_{i_{u_s-1}+1}}>|n_{s_0}|
\quad \text{and} \quad  c_s+\overline{w_{i_{u_s}+1}}>|n_{s_0}|,
$$
which implies that
\begin{align}\label{eq2-21}
n_{s_0}-c_s\leq |n_{s_0}|-c_s<\overline{w_{i_{u_s}+1}} \quad
\text{and} \quad n_{s_0}-c_s\geq -|n_{s_0}|-c_s>\overline{w_{i_{u_s-1}+1}}.
\end{align}
Hence, we conclude from (\ref{eq2-20}) and (\ref{eq2-21}) that
\begin{align*}
\overline{w_{i_{u_{s}-1}+1}}<n_{s_0}-c_s<\overline{w_{i_{u_s}+1}} \quad \text{and} \quad n_{s_0}-c_{s}\in \overline{W}
\end{align*}
for any $s>s_0$.

Using similar arguments of {\bf Case 1}, the validity of the last case clearly follows.
\end{proof}

\section{Proof of Theorem \ref{newtheorem}}
Theorem \ref{newtheorem} will be deduced from the following proposition.

\begin{proposition}\label{thm2}  Let $W=(m\mathbb{N}+X_m)\cup Y^{(0)}\cup Y^{(1)}$ be as in \eqref{eq-new1}. Suppose that $S$ is an integer set satisfying the condition that
for any integer set $G$ with property
$$
(-\infty,n_G]\subseteq G+S
$$
for some integer $n_G$ (probably negative), we have
$$
G\setminus\{g\}+S=G+S \quad (\forall g\in G).
$$
Then for any $Y^{(1)}=D+mS$ with $D\subseteq[0,m-1]$ the set $W$ does not have a minimal additive complement.
\end{proposition}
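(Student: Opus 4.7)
The plan is to derive a contradiction from the assumption that $C$ is a minimal additive complement to $W$. I decompose $C$ by residue mod $m$, writing $C_{r}=C\cap(r+m\mathbb{Z})$ and $G_{r}=(C_{r}-r)/m\subseteq\mathbb{Z}$ for each $r\in\{0,1,\ldots,m-1\}$. For a target residue $e\pmod m$, the contribution of $C_{r'}$ to $(C+W)\cap(e+m\mathbb{Z})$ comes through two channels: the $m\mathbb{N}+X_{m}$ channel, active when $e\equiv r'+x\pmod m$ for some $x\in X_{m}$ and producing $r'+x+m(G_{r'}+\mathbb{N})$; and the $Y^{(1)}=D+mS$ channel, active when $e\equiv r'+d\pmod m$ for some $d\in D$ and producing $e+m(G_{r'}+S+\gamma_{r'})$ for shifts $\gamma_{r'}\in\{0,1\}$. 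The assumption $X_{m}\cap D=\emptyset$ makes the two sets of residues $r'$ contributing to a given $e$ disjoint.

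Since $W$ is infinite one shows $C$ is infinite, so by pigeonhole some $C_{r}$ is infinite; I fix such an $r$. Set $M:=\max_{y\in Y^{(0)}}(x_{y}-y)$, where $x_{y}\in X_{m}$ is the unique element with $x_{y}\equiv y\pmod m$; finiteness of $Y^{(0)}$ gives $M<\infty$. I then select $c_{0}\in C_{r}$ admitting some $c'\in C_{r}\setminus\{c_{0}\}$ with $c'\leq c_{0}-M$: if $C_{r}$ is unbounded below, any $c_{0}\in C_{r}$ works; if $C_{r}$ is bounded below, I use that $C_{r}$ is unbounded above to pick $c_{0}\geq\min C_{r}+M$ and set $c':=\min C_{r}$.

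I then verify $C\setminus\{c_{0}\}$ remains a complement by examining each residue $e\pmod m$. If $c_{0}$ does not contribute to residue $e$, nothing needs checking. If $e\equiv r+x\pmod m$ for some $x\in X_{m}$, the lost ray $c_{0}+x+m\mathbb{N}$ is absorbed into $c'+x+m\mathbb{N}$, and each lost value $c_{0}+y$ with $y\in Y^{(0)}$, $y\equiv x\pmod m$, is recovered as $c'+((c_{0}-c')+y)\in c'+(m\mathbb{N}+X_{m})$ using $(c_{0}-c')+y\geq x_{y}$ by the choice of $M$. If instead $e\equiv r+d\pmod m$ for some $d\in D$, I distinguish two subcases. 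If some $C_{r''}$ contributing to residue $e$ through the $m\mathbb{N}+X_{m}$ channel is unbounded below, then that channel alone covers all of $e+m\mathbb{Z}$ and is untouched by the removal, since $r''\neq r$. Otherwise every such $C_{r''}$ is bounded below, so the total $Y^{(1)}$ contribution to residue $e$, namely $e+m(H_{e}+S)$ with $H_{e}=\bigcup_{r'}(G_{r'}+\gamma_{r'})$ the union over $Y^{(1)}$-active residues, must cover $(-\infty,N]\cap(e+m\mathbb{Z})$ for some $N$, equivalently $H_{e}+S\supseteq(-\infty,n_{e}]$. The hypothesis on $S$ then yields $(H_{e}\setminus\{h\})+S=H_{e}+S$ for every $h\in H_{e}$, and removing $c_{0}$ either leaves $H_{e}$ unchanged (when the image $g_{0}+\gamma_{r}$ of $c_{0}$ is also produced by another $r'\neq r$) or strips off the single point $g_{0}+\gamma_{r}$; in either case the $Y^{(1)}$ contribution to residue $e$ is preserved.

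The main obstacle, and the only place the abstract hypothesis on $S$ is genuinely used, is this last subcase: one must first reduce the global assertion $C+W=\mathbb{Z}$ to the specific inclusion $H_{e}+S\supseteq(-\infty,n_{e}]$ for the integer set $H_{e}$ built from the $Y^{(1)}$-active residues, and only then does the hypothesis on $S$ deliver the removability of any single point of $H_{e}$. The disjointness $X_{m}\cap D=\emptyset$ is essential here: it isolates the $Y^{(1)}$ channel from the $m\mathbb{N}+X_{m}$ channel in the residue bookkeeping, which is what makes the reduction and the subsequent application of the hypothesis clean enough to yield the desired contradiction with the minimality of $C$.
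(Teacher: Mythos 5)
Your proof is correct, and its two engines are exactly the paper's: rays from the $m\mathbb{N}+X_m$ channel absorb into one another, and in any residue class $e$ that is reached arbitrarily far to the left only through the $D+mS$ channel, the hypothesis on $S$ is applied to the union $H_e$ of the shifted copies $G_{r'}+\gamma_{r'}$ --- this is precisely the paper's set $\widetilde{G}_i=\bigcup_{(c,d)}(k_{c,d}+G_{c})$. The difference is in the packaging. The paper fixes a witness $n_0=c_0+w_0$ that supposedly needs $c_0$, splits according to whether $n_0\bmod m$ lies in $J+X_m$ (with $J$ the set of residues in which $C$ has infinitely many negative elements), and in the hard subcase traces the alternative representation $g+s=g_1+s_1$ back to a pair $(c_1'+mg_1')+(d_1+ms_1)$, forcing $d\neq d_1$ while $d\equiv d_1\pmod m$. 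You instead verify directly, residue class by residue class, that $C\setminus\{c_0\}+W=\mathbb{Z}$ for a $c_0$ equipped with a companion $c'\le c_0-M$ in the same class (the shift $M$ absorbing the finitely many $Y^{(0)}$ translates); this sidesteps the paper's Case 1/Case 2 dichotomy and the set $J$ altogether, at the cost of one extra subcase (some other $X_m$-active class unbounded below), which you handle correctly via the disjointness of the $X_m$- and $D$-active residues. One small imprecision: $C$ is infinite not because $W$ is infinite but because $W$ is bounded below in this setting, whence $\inf C=-\infty$; with that reading, your selection of an infinite $C_r$ and of the companion $c'$ is sound.
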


\begin{proof}
Suppose the contrary, we assume that $C$ is a minimal additive complement to $W$.
We will deduce a contradiction.
For any $i\in[0,m-1]$, we define
$$
C_i=\{c\in C:c\equiv i\pmod{m}\}
$$
and
$$
J=\big\{j: \big|C_j\cap \mathbb{Z}^{-}\big|=+\infty,~0\leq j\leq m-1\big\}.
$$
For any integer (if there does exist) $t\in [0,m-1]\setminus J$, we have $|C_t\cap \mathbb{Z}^{-}|<+\infty$, and so
$$\inf\bigg(\bigcup\limits_{t\in [0,m-1]\setminus J} (C_t+W)\bigg)>-\infty,$$
which implies that
$$J+W\pmod{m}=[0,m-1].$$
We define $\mathcal{C}_1=J+X_m\pmod{m}$ and $\mathcal{C}_2=J+Y^{(1)}\pmod{m}$. Then
$$
\mathcal{C}_1\cup \mathcal{C}_2=J+\big(X_m\cup Y^{(1)}\big)\pmod{m}=J+W\pmod{m}=[0,m-1],
$$
where the last but one equality follows from the fact $Y^{(0)}\pmod{m}\subseteq X_m$. The arguments will be separated into the following two cases.

{\bf Case 1.} $\mathcal{C}_2\subseteq \mathcal{C}_1$.
Let $c_0\in C$ be any fixed number. Noting that $\mathcal{C}_1\cup \mathcal{C}_2=[0,m-1]$, we clearly have $\mathcal{C}_1=[0,m-1]$. So, for any integer $n$, there exist $k\in \mathbb{Z}$, $x\in X_m$ and $j\in J$ such that $n=j+x+km$. Recall that $|C_j\cap \mathbb{Z}^{-}|=+\infty$ for any $j\in J$. There exists a positive integer $t>|k|$ such that $j-tm\in C$. Thus,
 $$n=(j-tm)+\big(x+(k+t)m\big)\in C+W.$$
If $j-tm\neq c_0$, then $n\in C\setminus\{c_0\}+W$. If $j-tm=c_0$, then there exists a positive integer $t'>t$ such that $j-(t+t')m\in C$ since $|C_j\cap \mathbb{Z}^{-}|=+\infty$ for any $j\in J$.  Thus,
$$
n=(j-(t+t')m)+(x+(k+t+t')m)\in C\setminus\{c_0\}+W.
$$
Hence, $C\setminus\{c_0\}+W=\mathbb{Z}$ which is a contradiction with the minimal property of $C$. 
 
{\bf Case 2.} $\mathcal{C}_2\not\subseteq \mathcal{C}_1$. For any $i\in[0,m-1]$ and $H,K\subseteq[0,m-1]$, we define
$$
C(i;H,K)=\big\{(h,k):h+k\equiv i \pmod{m},~h\in H,~k\in K\big\}.
$$
For any $(h,k)\in C(i;H,K)$ let
$$
\mathcal{A}_{h,k}=\big((h+m\mathbb{Z})\cap C\big)+\big((k+m\mathbb{Z})\cap W\big).
$$
Let $\mathscr{C}=C\pmod{m}$ and $\mathscr{W}=W\pmod{m}$. Then for any $i\in[0,m-1]$, we claim
\begin{align}\label{eqth2-1}
i+m\mathbb{Z}&=\bigcup\limits_{(c,w)\in C(i,\mathscr{C},\mathscr{W})}\mathcal{A}_{c,w}.
\end{align}
In fact, for any $(c,w)\in C(i,\mathscr{C},\mathscr{W})$ it is plain that
\begin{align*}
\mathcal{A}_{c,w}=((c+m\mathbb{Z})\cap C\big)+\big((w+m\mathbb{Z})\cap W\big)\subseteq (c+m\mathbb{Z})+(w+m\mathbb{Z})\subseteq i+m\mathbb{Z}.
\end{align*}
On the other hand, for any $i+mt\in i+m\mathbb{Z}$, we can assume
$$
i+mt=c+w \quad (c\in C, w\in W)
$$
since $C+W=\mathbb{Z}$. We now suppose that 
$
c\equiv c_1\pmod{m}$ 
and 
$
 w\equiv w_1\pmod{m},
$
then 
$$
c_1+w_1\equiv c+w\equiv i\pmod{m}.
$$
Thus, we have
$
i+mt\in \mathcal{A}_{c_1,w_1}.
$
Note that $W=(m\mathbb{N}+X_m)\cup Y^{(0)}\cup Y^{(1)}$ with $Y^{(0)}$ being a finite set and $Y^{(0)}\pmod{m}\subseteq X_m$. We have $\mathscr{W}=X_m\cup D\pmod{m}$. So, we deduce
\begin{align}\label{eqth2-2}
\bigcup\limits_{(c,w)\in C(i,\mathscr{C},\mathscr{W})}\mathcal{A}_{c,w}=\bigg(\bigcup\limits_{(c,w)\in C(i,\mathscr{C},X_m)}\mathcal{A}_{c,w}\bigg)\cup\bigg(\bigcup\limits_{(c,d)\in C(i,\mathscr{C},D)}\mathcal{A}_{c,d}\bigg).
\end{align}
Trivially, we have $\mathscr{C}=J\cup(\mathscr{C}\setminus J)$. Hence, we conclude from (\ref{eqth2-1}) and (\ref{eqth2-2}) that
\begin{align}\label{eqth2-3}
i+m\mathbb{Z}=\!\bigg(\bigcup\limits_{(c,w)\in C(i,J,X_m)}\!\mathcal{A}_{c,w}\bigg)\cup\bigg(\bigcup\limits_{(c,w)\in C(i,\mathscr{C}\setminus J,X_m)}\!\mathcal{A}_{c,w}\bigg)\cup\bigg(\bigcup\limits_{(c,d)\in C(i,\mathscr{C},D)}\!\mathcal{A}_{c,d}\bigg).
\end{align}
 
For any $d\in D$, it clear that 
$$
(d+m\mathbb{Z})\cap W=d+mS
$$
since $D\pmod{m}=Y^{(1)}\pmod{m}$, $Y^{(0)}\pmod{m}\subseteq X_m$ and $\big(Y^{(1)}\pmod{m}\big)\cap X_m=\emptyset$.
For any $(c,d)\in C(i,\mathscr{C},D)$, we have $c+d\equiv i \pmod{m}$. Suppose that $c+d=i+mk_{c,d}$ and $C\cap (c+m\mathbb{Z})=c+mG_{c}$ for $c\in \mathscr{C}$,
then
\begin{align}\label{eqth2-4}
\bigcup\limits_{(c,d)\in C(i,\mathscr{C},D)}\mathcal{A}_{c,d}&=\bigcup\limits_{(c,d)\in C(i,\mathscr{C},D)}\Big(\big((c+m\mathbb{Z})\cap C\big)+(d+mS)\Big)\nonumber\\
&=\bigcup\limits_{(c,d)\in C(i,\mathscr{C},D)} \Big((c+mG_{c})+(d+mS)\Big)\nonumber\\
&=\bigcup\limits_{(c,d)\in C(i,\mathscr{C},D)}\Big((c+d)+m(G_{c}+S)\Big)\nonumber\\
&=\bigcup\limits_{(c,d)\in C(i,\mathscr{C},D)}\Big(i+mk_{c,d}+m(G_{c}+S)\Big)\nonumber\\
&=\bigcup\limits_{(c,d)\in C(i,\mathscr{C},D)}\Big(i+m\big((k_{c,d}+G_{c})+S\big)\Big)\nonumber\\
&=i+m\bigg(\Big(\bigcup\limits_{(c,d)\in C(i,\mathscr{C},D)}(k_{c,d}+G_{c})\Big)+S\bigg).
\end{align}

Taking $c_0\in C$ with $c_0\pmod{m}\in J$. Noting that $C$ is a minimal additive complement to $W$, there exist integers $n_0\in \mathbb{Z}$ and $w_0\in W$ such that 
\begin{align}\label{new-1}
n_0=c_0+w_0 \quad \text{and} \quad n_0\neq c+w
\end{align}
for any $w\in W$ and $c\in C\setminus\{c_0\}$. The remaining argument will be separated into the following two subcases.

{\bf Subcase 2.1.} If $n_0\pmod{m}\in \mathcal{C}_1\pmod{m}$, then 
$$
n_0\pmod{m}\in J+X_m\pmod{m}
$$ 
by the definition of $\mathcal{C}_1$.
Hence,  there exist $k\in \mathbb{Z}$, $x\in X_m$ and $j\in J$ such that 
$$
n_0=j+x+km.
$$
Noting that $|C_j\cap \mathbb{Z}^{-}|=+\infty$ for any $j\in J$, there exists an positive
integer $t>|k|$ such that $j-tm\in C$. Thus,
$$n_0=(j-tm)+(x+(k+t)m)\in C+W.$$
If $j-tm\neq c_0$, then $n_0\in C\setminus\{c_0\}+W$. If $j-tm=c_0$, then by $|C_j\cap \mathbb{Z}^{-}|=+\infty$ again, there exists an positive integer $t'>t$ such that $j-(t+t')m\in C$.  Then
$$
n_0=(j-(t+t')m)+(x+(k+t+t')m)\in C\setminus\{c_0\}+W,
$$
which leads to a contradiction.

{\bf Subcase 2.2.} If $n_0\pmod{m}\not\in \mathcal{C}_1\pmod{m}$, then there exists $i$ with $i\in \mathcal{C}_2\setminus \mathcal{C}_1$ such that $n_0\equiv i\pmod{m}$ since $\mathcal{C}_1\cup \mathcal{C}_2=[0,m-1]$. It now follows from \eqref{eqth2-3} that
\begin{align}\label{eqth2-5}
i+m\mathbb{Z}=\bigg(\bigcup\limits_{(c,w)\in C(i,\mathscr{C}\setminus J,X_m)}\!\mathcal{A}_{c,w}\bigg)\cup\bigg(\bigcup\limits_{(c,d)\in C(i,\mathscr{C},D)}\!\mathcal{A}_{c,d}\bigg).
\end{align}
Noting that $|C_j\cap \mathbb{Z}^{-}|<+\infty$ for any $j\in \mathscr{C}\setminus J$, there exists an integer $a$ such that
\begin{align}\label{eqth2-6}
\bigcup\limits_{(c,w)\in C(i,\mathscr{C}\setminus J,X_m)}\mathcal{A}_{c,w}\subseteq i+m[a,+\infty),
\end{align}
which implies from \eqref{eqth2-4}, \eqref{eqth2-5} and \eqref{eqth2-6} that
\begin{align}\label{eqth2-7}
(-\infty, n_G]\subseteq \bigg(\bigcup\limits_{(c,d)\in C(i,\mathscr{C},D)}(k_{c,d}+G_{c})\bigg)+S
\end{align}
for some integer $n_G$.
Let 
$
\widetilde{G}_i=\bigcup\limits_{(c,d)\in C(i,\mathscr{C},D)}(k_{c,d}+G_{c}).
$ 
Then for any $g\in \widetilde{G}_i$ we have
\begin{align}\label{eqth2-8}
\widetilde{G}_i\setminus\{g\}+S=\widetilde{G}_i+S
\end{align}
thanks to \eqref{eqth2-7} and the condition of our theorem.

For the integer $c_0$ chosen formerly, we assume that
$$
c_0\equiv c'\pmod{m},
$$
where $c'\in \mathscr{C}$.
Then there exists $g'\in G_{c'}$ so that $c_0=c'+mg'$ by the definition of $G_{c'}$. Recall that $n_0=c_0+w_0$ with $c_0\pmod{m}\in J$ and 
$$
n_0\pmod{m}\in \mathcal{C}_2(=J+Y^{(1)})\setminus\mathcal{C}_1\pmod{m}.
$$
So, we can deduce
$
w_0\in Y^{(1)}.
$
Hence, there 
exist an integer $d\in D$ and $s\in S$ with $(c',d)\in C(i,\mathscr{C},D)$ such that $w_0=d+ms$.
Thus, we have
\begin{align}\label{eqth2-10}
n_0=&c_0+w_0\nonumber\\
=&(c'+mg')+(d+ms)\nonumber\\
=&i+mk_{c',d}+m(g'+s)\nonumber\\
=&i+m((k_{c'',d}+g')+s)\nonumber\\
=&i+m(g+s),
\end{align}
where $g=k_{c',d}+g'$.
Noting that $g\in \widetilde{G}_i$, we have $\widetilde{G}_i\setminus\{g\}+S=\widetilde{G}_i+S$ by \eqref{eqth2-8}. Hence, there exist integers $g_1\in \widetilde{G}_i$ with $g_1\neq g$ and $s_1\in S$ such that $g+s=g_1+s_1$. Noting also that $g_1\in \widetilde{G}_i$, there exists integer $c'_1\in \mathscr{C}$, integer $d_1\in D$ and $g'_1\in G_{c_1'}$ such that $g_1=k_{c'_1,d_1}+g'_1$ by the definition of $\widetilde{G}_i$. Therefore, by \eqref{eqth2-10} we have
\begin{align}\label{eqth2-11}
n_0=&i+m(g_1+s_1)\nonumber\\
=&i+m\big((k_{c'_1,d_1}+g'_1)+s_1\big)\nonumber\\
=&i+mk_{c'_1,d_1}+m(g'_1+s_1)\nonumber\\
=&(c'_1+mg'_1)+(d_1+ms_1).
\end{align}
Recall that  $c'_1+mg'_1\in C$
and 
$
n_0\neq c+w
$
for any $w\in W$ and $c\in C\setminus\{c_0\}$ by \eqref{new-1}, it follows from \eqref{eqth2-11} that
$$
c_0=c'+mg'=c'_1+mg'_1,
$$
which implies that $c'=c'_1$ and $g'=g'_1$. Noting further that $g\neq g_1$, we deduce 
$$
g-g_1=k_{c',d}+g'-(k_{c'_1,d_1}+g'_1)=k_{c',d}-k_{c'_1,d_1}\neq 0,
$$
which leads to the fact $d\neq d_1$ because $c'=c'_1$. Since
$$c'+d\equiv c'_1+d_1\equiv i\pmod{m}$$
and $c'=c'_1$, it follows that $d\equiv d_1\pmod{m}$. On observing $d,d_1\in D$, we have $d=d_1$, which is certainly a contradiction.
\end{proof}

We now carry out the proof of Theorem \ref{newtheorem}.

\begin{proof}[Proof of Theorem \ref{newtheorem} via Proposition \ref{thm2}]
Let $G$ be a set satisfying
$$
(-\infty,n_G]\subseteq G+k\mathbb{N}
$$
for some integer $n_G$.
Let $g$ be any fixed element of $G$. We prove that
$$
n\in G\setminus\{g\}+k\mathbb{N} 
$$
for any $n\in G+k\mathbb{N}$, from which it follows that
$$
(G\setminus\{g\})+k\mathbb{N}=G+k\mathbb{N}.
$$
In fact, for any $n\in G+k\mathbb{N}$ there exist integers $g'\in G$ and $n'\in \mathbb{N}$ such that $n=g'+kn'$. If $g'\neq g$, then $n\in G\setminus\{g\}+k\mathbb{N}$.  It remains to consider the case $g'=g$. In this case, we have
$$
n=g+kn'.
$$
Clearly, there exists some positive integer $t$ such that $g-kt<n_G$.
Recall that
$$
(-\infty,n_G]\subseteq G+k\mathbb{N},
$$
so we have $g-kt\in G+k\mathbb{N}$. Hence, $g-kt=g_1+k\ell$ for some positive integer $\ell$ and some $g_1\in G$, i.e., $g-k(t+\ell)\in G$. Thus,
$$
n=g+kn'=\big(g-k(t+\ell)\big)+k(n'+t+\ell)\in G\setminus\{g\}+k\mathbb{N}.
$$
By Proposition \ref{thm2}, there does not exist a minimal complement to $W$.
\end{proof}

\section*{Acknowledgments}
The first named author is supported by National Natural Science Foundation of China (Grant No. 12301003), Anhui Provincial Natural Science Foundation (Grant No. 2308085QA02) and University Natural Science Research Project of Anhui Province (Grant No. 2022AH050171).

The second named author is supported by National Natural Science Foundation of China  (Grant No. 12201544), Natural Science Foundation of Jiangsu Province, China (Grant No. BK20210784), China Postdoctoral Science Foundation (Grant No. 2022M710121).

\end{document}